\definecolor{plum} {rgb}{.4,0,.4}
\definecolor{BrickRed} {rgb}{0.6,0,0}
\def\ddefloop#1{\ifx\ddefloop#1\else\ddef{#1}\expandafter\ddefloop\fi}
\def\ddef#1{\expandafter\def\csname b#1\endcsname{\ensuremath{\boldsymbol{#1}}}}
\def\ddef#1{\expandafter\def\csname c#1\endcsname{\ensuremath{\mathcal{#1}}}}
\def\ddef#1{\expandafter\def\csname s#1\endcsname{\ensuremath{\mathsf{#1}}}}
\def\Reals{{\mathbb R}}
\def\Ex{{\mathbf E}} 
\def\trn{{\hbox{\it\tiny T}}} 
\def\eps{\varepsilon}
\def\deq{:=}
\newtheorem{theorem}{Theorem}
\newtheorem{remark}{Remark}
\title{\LARGE{\textbf{A Constructive Approach to Function Realization\\
by Neural Stochastic Differential Equations}}}
\author{Tanya Veeravalli and Maxim Raginsky%
\thanks{This work was supported in part by the NSF under award CCF-2106358 (``Analysis and Geometry of Neural Dynamical Systems'') and in part by the Illinois Institute for Data Science and Dynamical Systems
(iDS${}^2$), an NSF HDR TRIPODS institute, under award CCF-1934986.}%
\thanks{The authors are with the Department of Electrical and Computer Engineering and the Coordinated Science Laboratory, University of Illinois, Urbana, IL, USA. Emails: veerava2@illinois.edu, maxim@illinois.edu}
}
\begin{document}

\maketitle


\begin{abstract}
The problem of function approximation by neural dynamical systems has typically been approached in a top-down manner: Any continuous function can be approximated to an arbitrary accuracy by a sufficiently complex model with a given architecture. This can lead to high-complexity controls which are impractical in applications. In this paper, we take the opposite, constructive approach: We impose various structural restrictions on system dynamics and consequently characterize the class of functions that can be realized by such a system. The systems are implemented as a cascade interconnection of a neural stochastic differential equation (Neural SDE), a deterministic dynamical system, and a readout map. Both probabilistic and geometric (Lie-theoretic) methods are used to characterize the classes of functions realized by such systems.
\end{abstract}

\begin{keywords} neural dynamical systems; stochastic differential equations; geometric control theory
\end{keywords}

\section{Introduction}

There is an extensive literature on function approximation by neural nets. For instance, a classical result due to Leshno et al.~\cite{Leshno_etal_NN} states the following: Let $\sigma : \Reals \to \Reals$ be a continuous function which is not a polynomial.  Then for any continuous $f : \Reals^n \to \Reals$, any compact $K \subset \Reals^n$, and any $\eps > 0$, there exist $N$ tuples $(c_i,a_i,b_i) \in \Reals \times \Reals^n \times \Reals$ such that
\begin{align}\label{eq:2LNN_approx}
	\sup_{x \in K} \Bigg|f(x) - \sum^N_{i=1} c_i \sigma(a_i^\trn x + b_i)\Bigg| \le \eps
\end{align}
(in fact, the universal approximation property holds only if $\sigma$ is not a polynomial). The approximating function in \eqref{eq:2LNN_approx} is an instance of a neural net with one hidden layer, $N$ hidden units (or neurons), and activation function $\sigma$. There are also universal approximation results for multilayer neural nets \cite{Yarotsky_ReLU_approx}, as well as depth separation results \cite{Telgarsky_depth_sep} which show (constructively) that there exist functions that admit efficient representation by multilayer neural nets, yet require exponentially more hidden units for representation by ``shallower'' nets.

Recently, motivated in large part by the widespread use of deep learning, there has been a lot of interest in continuous-time abstractions of neural nets \cite{Hirsch89neural,Haber_2018,Weinan2017dynamical,chen2018neural}, aptly termed \textit{neural ODEs}. These are modeled by dynamical systems of the form\begin{subequations}\label{eq:neural_ODE}
	\begin{align}
	\dot{z}(t) &= f(z(t),w(t)), \qquad z(0) = \alpha(x)\\
	y &= \beta(z(1))
	\end{align}
\end{subequations}
where the $q$-dimensional input $x$ is mapped to an initial $n$-dimensional state $z(0)$ by a read-in map $\alpha : \Reals^q \to \Reals^n$, the $p$-dimensional output $y$ is produced from the state $z(1)$ at time $t=1$ by a read-out map $\beta : \Reals^n \to \Reals^p$, and $f(\cdot,w)$ is a family of vector fields on $\Reals^n$ parametrized by  $w \in \Reals^m$. For example, we could have something like $w = (C,A,b) \in \Reals^{n \times k} \times \Reals^{k \times n} \times \Reals^k$ and
\begin{align*}
	f(x,w) = C\boldsymbol{\sigma}(Ax + b),
\end{align*}
where $\boldsymbol{\sigma}$ is the diagonal map
\begin{align}\label{eq:diag_sigma}
	\boldsymbol{\sigma}(z) \deq \big(\sigma(z_1),\dots,\sigma(z_k)\big)^\trn, \, z \in \Reals^k.
\end{align}
The problem of universal approximation is then to establish the conditions under which any continuous function $f : \Reals^q \to \Reals^p$ could be approximated, to any given accuracy $\eps$, on a given compact set $K \subset \Reals^q$ by a system of the form \eqref{eq:neural_ODE} for a suitable choice of the control law $w : [0,1] \to \Reals^m$. 

This problem has been addressed in a number of recent works \cite{li2019deep,Ruiz-Balet:2021yj,tabuada2021universal,Agrachev2022}. While the methods and techniques differ, the overall philosophy of the results is \textit{top-down}---under suitable structural assumptions on $\alpha$, $\beta$, and $f(\cdot,w)$, \textit{any} continuous function can be approximated to \textit{any} desired accuracy by choosing a suitable control $w(\cdot)$. The resulting controls, however, tend to exhibit fairly high complexity unlikely to be tolerated in practical applications. For example, they may be piecewise constant, but the number of pieces will grow like $(1/\eps)^d$, where $d$ depends polynomially on $q$ \cite{li2019deep,Ruiz-Balet:2021yj,tabuada2021universal}; other constructions make use of high-gain, high-frequency controls \cite{Agrachev2022}, with similar complexity issues.

\subsection{A constructive approach}

In this paper, we approach the problem of function realization by neural dynamical systems from a \textit{constructive}, \textit{bottom-up} perspective. That is, instead of fixing \textit{a priori} the class of functions to be approximated, we impose various structural restrictions on the system dynamics and then characterize the class of functions that can be realized by the system. To fix ideas, let us consider a cascade interconnection \cite{Krener_decomposition} of two dynamical systems of the following form: 
\begin{subequations}\label{eq:cascade_ODE}
\begin{align}
	\dot{w}(t) &= g(w(t),\theta) \label{eq:cascade_ODE_weights}\\
	\dot{z}(t) &= f(z(t),w(t)), \label{eq:cascade_ODE_activations}
\end{align}
\end{subequations}
where $g(\cdot,\theta)$ is a family of vector fields on $\Reals^m$ parametrized by $\theta \in \Reals^k$. This system differs from \eqref{eq:neural_ODE} in one key respect: The trajectory $w(t)$ is not prescribed externally, but is instead generated internally according to \eqref{eq:cascade_ODE_weights}. Given the read-in map $\alpha$ and the read-out map $\beta$, as in \eqref{eq:neural_ODE}, we can realize different functions of $x \in \Reals^q$ by tuning the parameter $\theta$ [and, possibly, the initial condition $w(0)$]. 

Neural ODE models of this type have been considered in the literature. For example, Choromanski et al.~\cite{ODEtoODE} analyzed the case when $w$ takes values in the orthogonal group $O(n)$, the vector fields $g(\cdot,\theta)$ are of the form
\begin{align*}
	g(w,\theta) = w\tilde{g}(w,\theta)
\end{align*}
for some finitely parametrized map $\tilde{g}(\cdot,\theta) : \Reals^{n \times n} \to {\rm Skew}(n)$, where ${\rm Skew}(n)$ is the set of all $n \times n$ skew-symmetric matrices with real entries, and 
\begin{align*}
	f(z,w) = \boldsymbol{\sigma}(wz),
\end{align*}
where $\boldsymbol{\sigma}$ is the coordinatewise application of some scalar nonlinearity $\sigma : \Reals \to \Reals$, as in \eqref{eq:diag_sigma}. While Choromanski et al.\ did not explicitly characterize the class of functions that can be realized by this model, they showed that they enjoy certain stability properties that are advantageous in applications.

We will use cascade models like \eqref{eq:cascade_ODE} as a starting point, but will replace the deterministic dynamics \eqref{eq:cascade_ODE_weights} with a stochastic differential equation (SDE) driven by a multidimensional Brownian motion.  The rationale for using such \textit{neural SDEs} \cite{Wong_1991,Tzen2019TheoreticalGF,tzen2019neural,pmlr-v108-li20i} is twofold: First, by using Brownian motion processes as \textit{generalized inputs} \cite{Sussmann_geninputs}, we will be able to generate all the internal complexity we need while only varying the global parameters of the SDE. Second, it provides us with a sampling-based mechanism for constructing finite approximations \cite{barron1993universal}. Indeed, if $F(x;\omega)$ is a random function of $x$, then the expected value $f(x) \deq \Ex_\omega[F(x;\omega)]$ can be closely approximated by finite sums of the form
\begin{align*}
	\hat{f}_N(x) = \frac{1}{N}\sum^N_{i=1}F(x;\omega_i),
\end{align*}
where $\omega_1,\dots,\omega_N$ are independent copies of $\omega$, with the $L^2$ approximation error $\Ex[(f(x)-\hat{f}_N(x))^2]$ scaling as $O(1/N)$ under suitable assumptions on the second moment of $F(x;\omega)$. (While finite approximation is not the focus of this paper, our earlier work \cite{Veeravalli_L4DC2023} contains a discussion of sampling in the context of neural SDEs.)

\subsection{Previous results}

In an earlier work \cite{Veeravalli_L4DC2023}, we have obtained some constructive realizability results for a certain class of neural SDEs. Briefly, we have considered $n$-dimensional It\^o SDEs 
\begin{align}\label{eq:neural_SDE_L4DC}
	\dif X_t &= a(X_t; \theta) \dif t + b(X_t; \theta) \dif V_t, \qquad X_0 = x 
\end{align}
with finitely parametrized drift and diffusion coefficients satisfying the uniform ellipticity condition---the eigenvalues of the matrix $b(x;\theta)b(x;\theta)^\trn$ are uniformly bounded away from $0$.  Let $v$ be a fixed (nonrandom) vector in $\Reals^n$. We say that, for each choice of $\theta$, the above system \textit{realizes} the function $f(x;\theta) \deq \Ex[Y] = \Ex[v^\trn X_1]$ of the initial condition $x$. We have shown that $f(x;\theta)$ can be approximated by
\begin{align*}
	f(x;\theta) \approx c_1\int_{\Reals^n}  v^\trn z \cdot \exp\big(-c_2 I(x,z;\theta)\big) \dif z,
\end{align*}
where $c_1,c_2$ are some constants and where $I(x,z; \theta)$ is the minimum of the control energy $\frac{1}{2}\int^1_0 |v(t)|^2 \dif t$ among all sufficiently regular (e.g., $L^2$) controls $v : [0,1] \to \Reals^n$ that transfer the state of the deterministic control-affine system
\begin{align}\label{eq:neural_ODE_L4DC}
	\dot{x}(t) = a(x(t);\theta) + b(x(t);\theta)v(t)
\end{align}
from $x(0) = x$ to $x(1) = z$. Thus, the problem of characterizing the class of functions realized in this way reduces to analyzing deterministic minimum-energy control problems, and some upper and lower bounds on $I(x,z;\theta)$ are given in \cite{Veeravalli_L4DC2023}. However, the assumption of uniform ellipticity is rather restrictive and, in particular, rules out the interesting cases when the diffusion process in \eqref{eq:neural_SDE_L4DC} is degenerate, yet the deterministic system \eqref{eq:neural_ODE_L4DC} is nevertheless completely controllable \cite{Elliott_diffusions,Brockett_spheres}.

\section{The basic model and its properties}

We start by presenting a relatively simple model which will allow for a clean analysis. It has the following form:
\begin{subequations}\label{eq:cascade_neural_SDE}
\begin{align}
	\dif W_t&= a(W_t;\theta) \dif t + b(W_t;\theta) \circ \dif V_t \label{eq:SDE_weights}\\
	\dif Z_t &= h(W_t,t)Z_t \dif t \label{eq:SDE_FK}\\
	 Y &= Z_1 \sigma(W_1^\trn x) \label{eq:SDE_readout}
\end{align}
\end{subequations}
Here, \eqref{eq:SDE_weights} is a Stratonovich SDE for an $n$-dimensional diffusion process $(W_t)$ with a deterministic initial condition $W_0 = w_0$.  It is driven by an $m$-dimensional Brownian motion $(V_t)$, and its drift and diffusion coefficients are parametrized by a $k$-dimensional vector of parameters $\theta$. We assume throughout that the drift and the diffusion coefficients are sufficiently well-behaved (e.g., Lipschitz) to guarantee existence and uniqueness of strong solutions of the SDE. The process $(Z_t)$ is scalar, initialized at $Z_0 = 1$, and $h : \Reals^n \times [0,1] \to \Reals$ is a continuous function. The real-valued output $Y$ is obtained by multiplying $Z_1$ by $\sigma(W_1^\trn x)$.

\begin{remark} {\em By adding an equation $\dif X_t = 0$ initialized at $X_0 = x$, we can view the above system as a stochastic variant of \eqref{eq:cascade_ODE} with internal state $(W_t,Z_t,X_t)$, read-in map
	\begin{align*}\alpha(x) = (w_0,1,x),
	\end{align*}
	and read-out map
	\begin{align*}\beta(w,z,x) = z\sigma(w^\trn x).
	\end{align*}}
\end{remark}

The output $Y$ of \eqref{eq:cascade_neural_SDE} is a \textit{random} function of $x$. We take its expected value as the function realized by this system:
\begin{align}\label{eq:f_theta}
	f(x;\theta) \deq \Ex[Z_1\sigma(W^\trn_1x)],
\end{align}
where in the left-hand side we have explicitly indicated the dependence of this function on the parameters $\theta$ of the SDE \eqref{eq:SDE_weights}. The class of functions realized in this way can be characterized as follows:

\begin{theorem}\label{thm:neural_SDE} The function \eqref{eq:f_theta} realized by the system \eqref{eq:cascade_neural_SDE} can be expressed as
	\begin{align}\label{eq:f_theta_FK}
		f(x;\theta) = \Ex\Bigg[\exp\Bigg(\int^1_0 h(W_t,t)\dif t\Bigg)\sigma(W^\trn_1 x)\Bigg].
	\end{align}
Moreover, let $\cL^\theta$ be the second-order linear differential operator
	\begin{align*}
		&\cL^\theta \varphi(w) \deq \tilde{a}(w;\theta)^\trn \nabla \varphi(w) + \frac{1}{2}{\rm tr} \{ b(w;\theta)b(w;\theta)^\trn \nabla^2 \varphi(w)\}
	\end{align*}
	where
	\begin{align*}
		\tilde{a}(w;\theta) = a(w;\theta) + \frac{1}{2}\sum^m_{i=1} \frac{\partial b_i}{\partial w}(w; \theta) b_i(w; \theta)
	\end{align*}
	is the It\^o-corrected drift, and where $b_i(w;\theta)$ is the $i$th column of $b(w;\theta)$. Consider the PDE
	\begin{align}\label{eq:PDE_FK}
		\frac{\partial u}{\partial w}(w,t;x,\theta) = \cL^\theta u(w,t;x,\theta) + h(w,t)u(w,t;x,\theta)
	\end{align}
with the initial condition $u(w,0; x,\theta) = \sigma(w^\trn x)$. If the function $w \mapsto \sigma(w^\trn x)$ is in the domain of $\cL^\theta$, then 
\begin{align*}
	f(x;\theta) = u(w_0,1; x,\theta).
\end{align*}
\end{theorem}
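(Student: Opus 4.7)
My plan is to handle the two claims separately. The identity \eqref{eq:f_theta_FK} follows immediately from noting that \eqref{eq:SDE_FK} is a pathwise ordinary differential equation in $t$: on each sample path, $Z_t$ satisfies a linear, homogeneous ODE with continuous coefficient $t \mapsto h(W_t, t)$ and initial condition $Z_0 = 1$, whose unique solution is
\begin{align*}
Z_t = \exp\Bigl(\int_0^t h(W_s, s) \dif s\Bigr).
\end{align*}
Substituting $Z_1$ into \eqref{eq:f_theta} and taking expectations yields \eqref{eq:f_theta_FK}.

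For the PDE characterization, I would invoke the Feynman--Kac formula, with two pieces of bookkeeping. First, convert the Stratonovich SDE \eqref{eq:SDE_weights} to its equivalent It\^o form
\begin{align*}
\dif W_t = \tilde{a}(W_t;\theta) \dif t + b(W_t;\theta) \dif V_t,
\end{align*}
whose drift is precisely the corrected $\tilde{a}$ appearing in the statement; the infinitesimal generator of this It\^o diffusion is then exactly $\cL^\theta$. Second, read the PDE \eqref{eq:PDE_FK} as a forward parabolic equation in $t$ and view it as the Feynman--Kac equation associated with this diffusion and the potential $h$.

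The core step is to show that the process
\begin{align*}
M_t \deq u(W_t, 1 - t; x, \theta) \exp\Bigl(\int_0^t h(W_s, s) \dif s\Bigr), \qquad t \in [0,1],
\end{align*}
is a martingale. Applying It\^o's formula to $u(W_t, 1-t; x, \theta)$ and noting that the exponential factor is of bounded variation with derivative $h(W_t,t)\exp(\cdots)$, the $\dif t$-drift of $M_t$ collects as $[-\partial_t u + \cL^\theta u + h\,u]_{(W_t,\,1-t)} \exp(\cdots)\,\dif t$ and vanishes identically by \eqref{eq:PDE_FK}. What remains is a stochastic integral against $\dif V_t$, so $M$ is a continuous local martingale. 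At the endpoints, $M_0 = u(w_0, 1; x, \theta)$ is deterministic while $M_1 = \sigma(W_1^\trn x) \exp\bigl(\int_0^1 h(W_t, t)\,\dif t\bigr)$, so the martingale identity $\Ex[M_1] = M_0$ combined with \eqref{eq:f_theta_FK} gives exactly $f(x;\theta) = u(w_0, 1; x, \theta)$.

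The main obstacle is technical rather than conceptual: upgrading $M$ from a local to a true martingale so that $\Ex[M_1] = \Ex[M_0]$ actually holds. The hypothesis that $w \mapsto \sigma(w^\trn x)$ lies in the domain of $\cL^\theta$ is what provides the regularity needed to apply It\^o's formula to $u$, and a standard localization argument (stopping times $\tau_n \uparrow 1$ followed by dominated convergence) handles integrability of the stochastic integral and of $\sigma(W_1^\trn x)\exp(\int_0^1 h(W_t,t)\dif t)$ under mild growth assumptions on $a$, $b$, and $h$.
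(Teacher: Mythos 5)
Your treatment of the first claim is identical to the paper's: solve \eqref{eq:SDE_FK} pathwise as a scalar linear ODE, substitute $Z_1 = \exp(\int_0^1 h(W_s,s)\dif s)$ into \eqref{eq:f_theta}, and take expectations. For the second claim you go in the opposite direction from the paper: the paper \emph{defines} $u(w,t;x,\theta) = \Ex[\exp(\int_0^t h(W_s,s)\dif s)\sigma(W_t^\trn x)\,|\,W_0=w]$ and invokes the converse of the Feynman--Kac theorem to assert that this function solves \eqref{eq:PDE_FK}, whereas you assume a classical solution $u$ of \eqref{eq:PDE_FK} is given and recover the stochastic representation by a martingale argument. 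Your route is the standard dual one; it trades the paper's appeal to the converse theorem for an existence assumption on $u$ plus the localization and integrability work that you correctly flag as the remaining technical burden.

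There is, however, a concrete gap in your drift computation, caused by the time dependence of $h$. In $M_t = u(W_t,1-t)\exp(\int_0^t h(W_s,s)\dif s)$ the exponential factor contributes the term $h(W_t,t)M_t\,\dif t$, while It\^o's formula applied to $u(W_t,1-t)$ together with the PDE \eqref{eq:PDE_FK} evaluated at the point $(W_t,1-t)$ gives $-\p_t u + \cL^\theta u = -h(W_t,1-t)\,u$. The drift of $M_t$ is therefore $[h(W_t,t)-h(W_t,1-t)]\,u(W_t,1-t)\exp(\cdots)\,\dif t$, which vanishes identically only when $h$ does not depend on $t$ (or is symmetric about $t=1/2$); the bracket you wrote, $[-\p_t u + \cL^\theta u + hu]_{(W_t,1-t)}$, silently evaluates the potential at the wrong time. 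To make $M$ a martingale for the PDE \eqref{eq:PDE_FK} one must instead use $\exp(\int_0^t h(W_s,1-s)\dif s)$, but then $M_1 = \sigma(W_1^\trn x)\exp(\int_0^1 h(W_s,1-s)\dif s)$, which matches \eqref{eq:f_theta_FK} only after replacing $h(w,t)$ by $h(w,1-t)$ in the PDE or restricting to time-independent $h$. This time-orientation issue is intrinsic to the statement (note that the paper's intended examples, such as $h(w,t)=-|w-\xi_t|^2$, are genuinely time-dependent) and is also glossed over in the paper's one-line appeal to the converse Feynman--Kac theorem; but your proof is the one that actually performs the computation where the mismatch surfaces, so it needs to be addressed explicitly rather than absorbed into the claim that the drift ``vanishes identically.''
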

\begin{proof} The equation \eqref{eq:SDE_FK} for $Z_t$ can be solved for each trajectory $(W_t)$:
	\begin{align*}
		Z_t &= Z_0 \exp\Bigg(\int^t_0 h(W_s,s)\dif s \Bigg) \nonumber\\
		&= \exp\Bigg(\int^t_0 h(W_s,s)\dif s \Bigg).
	\end{align*}
	Substituting this into the expression for $Y$ and taking expectations, we obtain \eqref{eq:f_theta_FK}.
	
	The operator $\cL^\theta$ is the infinitesimal generator of the diffusion process $(W_t)$ governed by the SDE \eqref{eq:SDE_weights}. Since $w \mapsto \sigma(w^\trn x)$ is in the domain of $\cL^\theta$, the function
	\begin{align*}
		& u(w,t;x,\theta) \nonumber\\
		&= \Ex\Bigg[\exp\Bigg(\int^t_0 h(W_s,s)\dif s\Bigg)\sigma(W^\trn_tx)\Bigg|W_0 = w\Bigg]
	\end{align*}
	is a solution of \eqref{eq:PDE_FK} subject to the initial condition $u(w,0;x, \theta) = \sigma(w^\trn x)$ by the (converse of) the Feynman--Kac theorem \cite[Thm.~21.1]{Kallenberg_probability}.
\end{proof}
\noindent A more elaborate set-up, in the spirit of \cite{ODEtoODE}, is as follows:
\begin{subequations}\label{eq:SDE_to_ODE}
\begin{align}
	\dif W_t &= a(W_t;\theta) \dif t + b(W_t;\theta) \circ \dif V_t \label{eq:SDE_to_ODE_weights}\\
	\dif Z_t &= \boldsymbol{\sigma}(W_t Z_t)\dif t \label{eq:SDE_to_ODE_state}\\
	Y &= v^\trn Z_1. \label{eq:SDE_to_ODE_readout}
\end{align}
\end{subequations}
The processes $(W_t)$ and $(Z_t)$ in Eqs.~\eqref{eq:SDE_to_ODE_weights} and \eqref{eq:SDE_to_ODE_state} are now evolving in $\Reals^{n \times n}$ and $\Reals^n$, respectively, and $v$ in \eqref{eq:SDE_to_ODE_readout} is a fixed vector in $\Reals^n$. With the initial condition $W_0 = w_0$ fixed, we introduce the read-in map $\alpha(x) = (w_0,x)$ and the read-out map $\beta(w,z) = v^\trn z$. The stochastic dynamics in \eqref{eq:SDE_to_ODE_weights} generates the matrices of weights $W_t$, which are used to control the dynamics of internal activations $Z_t$ in \eqref{eq:SDE_to_ODE_state}. The cascade form of the system allows us to first generate the random trajectory $(W_t)$ and then, conditionally on that, solve the ODE \eqref{eq:SDE_to_ODE_state}. As before, we say that the model \eqref{eq:SDE_to_ODE} realizes the function $f(x;\theta) = \Ex[Y] = \Ex[v^\trn Z_1]$. In contrast to \eqref{eq:cascade_neural_SDE}, where we were able to characterize the class of realized functions in a relatively clean manner, the determination of $(Z_t)$ for a given realization $(W_t)$ involves solving a time-inhomogeneous ODE
\begin{align}\label{eq:Z_ODE}
	\frac{\dif Z_t}{\dif t} = g_{W_t}(Z_t), \, Z_0 = x
\end{align}
where we have defined
\begin{align}\label{eq:gW}
	g_{W_t}(z) \deq \boldsymbol{\sigma}(W_tz).
\end{align}
We can compactly express the function realized by \eqref{eq:SDE_to_ODE} using the chronological exponential notation \cite{agrachev_sachkov_2004} as
\begin{align*}
	f(x;\theta) = \Ex\Bigg[v^\trn \overrightarrow{\exp}\left(\int^1_0 g_{W_t} \dif t\right)x \Bigg],
\end{align*}
which is just shorthand for the dependence of the solution of \eqref{eq:Z_ODE} on the initial condition $x$ and on the trajectory $(W_t)$, but any further analysis would involve asymptotic expansions in terms of iterated integrals and Lie derivatives.

\subsection{The role of Lie theory}

The above construction is quite general as it does not specify the form of the drift and the diffusion coefficients $a(w;\theta)$ and $b(w;\theta)$ in \eqref{eq:SDE_weights} and in \eqref{eq:SDE_to_ODE_weights}. \textit{A priori} we would expect that, in order to make the model \eqref{eq:cascade_neural_SDE} sufficiently expressive (i.e., to realize a sufficiently rich class of functions), we would need $a$ and $b$ to depend nonlinearly on both $w$ and $\theta$. This intuition, however, is misleading since we can let the functions $h$ and $\sigma$ do all the ``nonlinear work'' when the Lie algebra generated by the vector fields $a(\cdot;\theta), b_1(\cdot;\theta), \dots, b_m(\cdot;\theta)$ is finite-dimensional.

Thus, let $\cA$ be a finite-dimensional Lie algebra of vector fields on $\Reals^n$ and let $g_1,\dots,g_d$ be a basis of $\cA$, $d = \dim \cA$. For $\theta = (\theta_{ij})_{i = 0,\dots,m, j = 1,\dots d}$, take
\begin{align*}
	a(w;\theta) = \sum^d_{j=1} \theta_{0j} g_j(w)
\end{align*}
and
\begin{align*}
	b_i(w;\theta) = \sum^d_{j=1} \theta_{ij} g_j(w), \qquad i = 1, \dots, m.
\end{align*}
Since $\cA$ is finite-dimensional, it is isomorphic to a subalgebra of ${\rm gl}(\ell,\Reals)$ (the Lie algebra of $\ell \times \ell$ real matrices with the commutator bracket $[A,B] = AB - BA$) for some finite $\ell$ by Ado's theorem \cite{Varadarajan_Lie_groups}. Thus, without loss of generality (and by changing $n$ if necessary) we may take $g_i(w) = G_i w$ for some $G_i \in \Reals^{n \times n}$, so \eqref{eq:SDE_weights} will take the form
\begin{align}\label{eq:SDE_linparam}
	\dif W_t = A(\theta) W_t \dif t + \sum^m_{i=1} B_i(\theta)W_t \circ \dif V^i_t,
\end{align}
where $V^1_t,\dots,V^m_t$ are $m$ independent scalar Brownian motion processes, and where
\begin{align*}
	A(\theta) &= \sum^d_{j=1}\theta_{0j} G_j, \\
	B_i(\theta) &= \sum^d_{j=1}\theta_{ij} G_j, \quad i = 1, \dots, m.
\end{align*}
As an illustration of the expressive capabilities of this type of linear parametrization, we can consider Brockett's construction of a diffusion process on the sphere \cite{Brockett_spheres}: Let $A,B_1,\dots,B_m$ be $n \times n$ skew-symmetric matrices and consider the $n$-dimensional Stratonovich SDE
\begin{align}\label{eq:Brockett_SDE}
	\dif W_t = AW_t \dif t + \sum^m_{i=1} B_i W_t \circ \dif V^i_t.
\end{align}
or the equivalent It\^o SDE
\begin{align*}
	\dif W_t = \Bigg(A + \frac{1}{2}\sum^m_{i=1}B^2_i\Bigg)W_t \dif t + \sum^m_{i=1} B_i W_t \dif V^i_t.
\end{align*}
Applying It\^o's rule gives
\begin{align*}
	\dif\,(W^\trn_t W_t) &= 
	W^\trn_t \Bigg(A+A^\trn + \sum^m_{i=1}B^2_i\Bigg) W_t \dif t \nonumber\\
	&\qquad + \sum^m_{i=1} W^\trn_t (B_i + B^\trn_i) W_t  \dif V^i_t \\
	&\qquad + \sum^m_{i=1} W^\trn_t B^\trn_i B_i W_t \dif t \\
	& = 0
\end{align*}
so the Euclidean norm $|W_t|$ stays constant for all $t$. Thus, if we choose $w_0 \in S^{n-1}$, then the random trajectory $(W_t)_{t \ge 0}$ will be confined to $S^{n-1}$. The algebra ${\rm Skew}(n)$ of $n \times n$ skew-symmetric matrices has dimension $d = n(n-1)/2$, so we can express \eqref{eq:Brockett_SDE} in the form \eqref{eq:SDE_linparam} with a vector $\theta$ of $k = (m+1)n(n-1)/2$ parameters.

Similar considerations apply to the matrix-valued case. For example, if $A,B_1,\dots,B_m$ are $n \times n$ skew-symmetric matrices and the matrix Stratonovich SDE
\begin{align}\label{eq:Brockett_SDE_2}
	\dif W_t = AW_t \dif t + \sum^m_{i=1}B_i W_t \circ \dif V^i_t,
\end{align}
is initialized with $W_0 = w_0 \in O(n)$, then all $W_t$ will take values in $O(n)$ as well \cite{Brockett_spheres}. 

With regards to the role of $\sigma$, let us consider the family $\cF$ of the vector fields $g_W(z)$ defined in \eqref{eq:gW}. With typical choices of the nonlinearity $\sigma : \Reals \to \Reals$ (e.g., the hyperbolic tangent $\sigma(r) = \tanh r$), the Lie algebra generated by $\cF$ will be infinite-dimensional. To see this, suppose that $\sigma$ is $C^\infty$. Let $g,g'$ denote the vector fields $g_W,g_{W'}$ for two matrices $W,W' \in \Reals^n$. Then, using the formula
\begin{align*}
	\frac{\partial}{\partial z} \boldsymbol{\sigma}(Wz) = \boldsymbol{\sigma}^{(1)}(Wz)W
\end{align*}
for the Jacobian of $g_W$, where
$$
\boldsymbol{\sigma}^{(1)}(z) \deq {\rm diag}\big(\sigma'(z_1),\dots,\sigma'(z_n)\big), 
$$
a straightforward but tedious computation of the iterated Lie brackets
\begin{align*}
	{\rm ad}^{k+1}_g g' = {\rm ad}_g ({\rm ad}^{k}_g g'), \quad k = 0, 1, \dots
\end{align*}
where ${\rm ad}^0_g g' \deq g'$ and ${\rm ad}_g g' \deq [g,g'],$ yields vector fields whose coordinates involve derivatives of $\sigma$ of arbitrary orders. As an example, consider the case $\sigma(r) = \tanh r$. Since the derivative of $\sigma$ satisfies the relation $\sigma'(r) = 1-\sigma^2(r)$, the $k$th iterated Lie bracket ${\rm ad}^k_g g'$ will involve polynomials of degree $k+1$ in the entries of $\boldsymbol{\sigma}(Wz)$ and $\boldsymbol{\sigma}(W'z)$. Hence, using the linear independence of the monomials $1,r,r^2,\dots$, it is easy to see that the Lie algebra generated by $\cF$ will be infinite-dimensional. Moreover, the Lie algebra generated by $\cF$ may be infinite-dimensional even if $\sigma$ is a polynomial. Consider, for example, $\sigma(r) = 1 + r^3$; then even for $n=1$ the Lie algebra generated by $\cF$ will contain polynomial vector fields of arbitrary degree (and thus any continuous function can be approximated arbitrarily well on any given compact set by some element of this Lie algebra \cite{Agrachev2022}). This is in sharp contrast with the result of Leshno et al.~\cite{Leshno_etal_NN} on the necessity of nonpolynomial continuous  activation functions for universal approximation of continuous functions by neural nets with one hidden layer.

\subsection{Elaborations and extensions}

The basic model in \eqref{eq:cascade_neural_SDE} can be extended in various ways. For instance, we can consider matrix-valued processes $W_t$ and replace the output equation \eqref{eq:SDE_readout} with
\begin{align*}
	Y = Z_1 v^\trn \boldsymbol{\sigma}(W_1 x),
\end{align*}
where $W_t$ takes values in $\Reals^{p \times n}$ and $v \in \Reals^p$ is a fixed  vector. Another possibility is to let $W_t = (U_t,\tilde{W}_t)$ take values in $\Reals^p \times \Reals^{p \times n}$ and let
\begin{align*}
	Y = Z_1 U_1^\trn \boldsymbol{\sigma}(\tilde{W}_1 x).
\end{align*}
If $h \equiv 0$ in \eqref{eq:SDE_FK}, then the class of functions realized in this way is the closed convex hull of neural nets with one hidden layer and with arbitrarily many hidden units \cite{Gurvits97approximation}. To see this, let $(U^\nu,\tilde{W}^\nu)$, $\nu = 1,\dots,N$, be independent copies of $(U_1,\tilde{W}_1)$. (In other words, we just run $N$ independent copies of the stochastic dynamics \eqref{eq:SDE_weights} in parallel.) Then, assuming $U_t$ evolves on the unit sphere $S^{n-1}$, $\tilde{W}_t$ evolves on the orthogonal group $O(n)$, and the nonlinearity $\sigma : \Reals \to \Reals$ is bounded, the function
\begin{align*}
	\hat{f}_N(x) \deq \frac{1}{N}\sum^N_{\nu = 1} (U^\nu)^\trn \boldsymbol{\sigma}(W^\nu x)
\end{align*}
will be a good approximation of $f(x) = \Ex[U^\trn_1 \boldsymbol{\sigma}(W_1x)]$, for $N$ sufficiently large, by the law of large numbers. Fig.~\ref{fig:predict_neuron} shows a simple illustration of the ability of such models to realize neuron-like functions of the form $f(x) = \sigma(w^\trn x)$.
\begin{figure}[t]
	\includegraphics[width=\columnwidth]{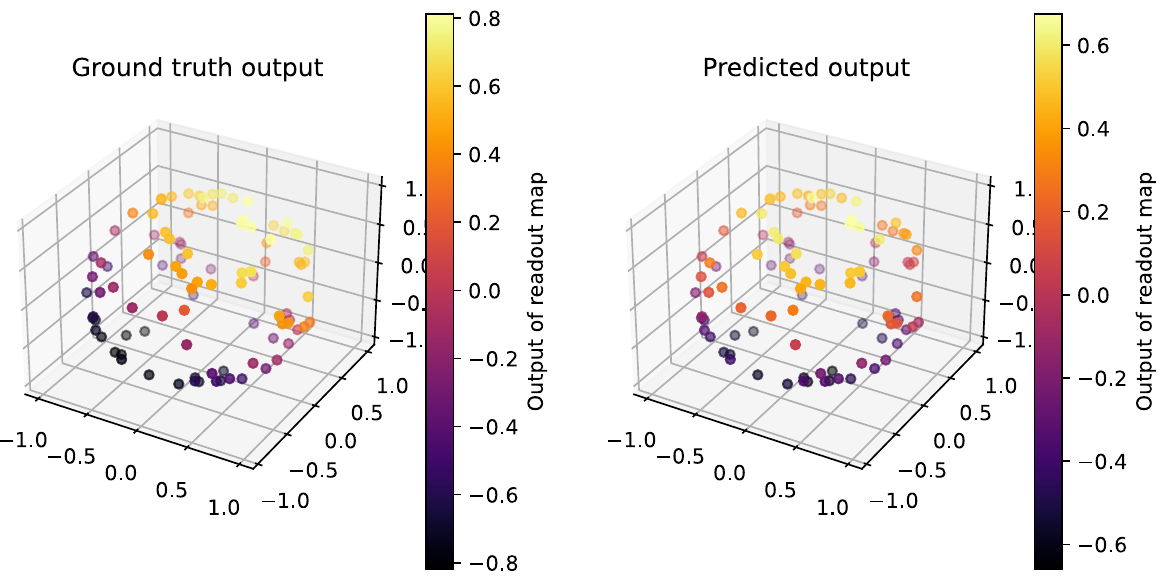}
	{\small \caption{Left: Sample values of the single-neuron function $f(x) = \tanh(w^\trn_0 x)$ on the unit sphere $S^2$, where $w_0 \in S^2$ is a fixed unit vector. Right: The values of $f(x)$ predicted by a model of the form \eqref{eq:cascade_neural_SDE} trained on random samples of $(X,f(X))$ with $X$ drawn from the uniform distribution on $S^2$. The weights $W_t$ evolve on $S^2$ according to \eqref{eq:Brockett_SDE} with $m = 2$.}\label{fig:predict_neuron}}
\end{figure}

The role of the function $h$ in \eqref{eq:SDE_FK} is to bias or regularize the trajectories $(W_t)$ in some manner, as far as their contribution to the value of $f(x;\theta)$ goes. For example, if some reference trajectory $(\xi_t)_{0 \le t \le 1}$ is given, we could take 
\begin{align*}
	h(w,t) = - |w - \xi_t|^2.
\end{align*}
In that case, we have
\begin{align*}
	f(x;\theta) = \Ex\Bigg[\sigma(W^\trn_1 x)\exp\Bigg(-\int^1_0 |W_t - \xi_t|^2\dif t \Bigg)\Bigg]
\end{align*}
which has the effect of penalizing those trajectories $(W_t)$ that differ too much from the reference.

The class of functions realized by the model \eqref{eq:SDE_to_ODE} will be in general richer than that realized by \eqref{eq:cascade_neural_SDE}. By way of illustration, consider the SDE \eqref{eq:Brockett_SDE_2} for a diffusion process on the orthogonal group $O(n)$. Suppose that the skew-symmetric matrices $A,B_1,\dots,B_m$ are such that the deterministic system
\begin{align}\label{eq:sphere_ODE}
	\dot{w}(t) = \Bigg(A + \sum^m_{i=1}v_i(t)B_i\Bigg) w(t)
\end{align}
is controllable on $O(n)$ \cite{Brockett_spheres}, i.e., any two matrices $w_0,w_1 \in O(n)$ can be joined by a curve lying along the trajectory $w(t)$ of \eqref{eq:sphere_ODE} generated by some piecewise constant controls $v_1(t),\dots,v_m(t)$. (For this, it suffices to consider the case $w_0 = I_n$ \cite{Jurdjevic_control_on_Lie_groups}.) Now, the probability law of the random path $(W_t)_{t \ge 0}$ in $O(n)$ starting at $W_0 = w_0$ is supported on the closure of the set of all trajectories of \eqref{eq:sphere_ODE} with $w(0) = w_0$ generated by piecewise constant controls \cite{Elliott_diffusions}, which by controllability covers the entire $O(n)$. This, in turn, will allow us to sample high-complexity trajectories for controlling \eqref{eq:Z_ODE}.

Finally, some comments on the generality of the considered models are in order. Both \eqref{eq:cascade_neural_SDE} and \eqref{eq:SDE_to_ODE} have a cascade structure, where the stochastic dynamics of the weights $W_t$ is autonomous (for each choice of the parameters $\theta$), and then these weights are used as generalized inputs \cite{Sussmann_geninputs} to the controlled dynamics of the internal activations $Z_t$. Moreover, the parameters $\theta$ explicitly enter only the dynamics of $W_t$. However, we can consider other models of neural SDEs discussed in the literature, for example a stochastically excited continuous-time recurrent neural net
\begin{align*}
	\dif X_t &= \big(- X_t + \boldsymbol{\sigma}(X_t)\big)\dif t + \sum^m_{i=1} B_i X_t \circ \dif V^i_t
\end{align*}
parametrized by $m$ matrices $B_1,\dots,B_m$. In models of this type, there is no explicit separation of the internal state into weights and activations. However, we can use the results of Freedman and Willems \cite{freedman1978smooth} and Krener and Lobry \cite{krener1981complexity} to show that such neural SDEs  can be \textit{simulated}, in a certain sense, by systems of the cascade type.

To that end, consider the $n$-dimensional stochastic system\begin{subequations}\label{eq:general_neural_SDE}
\begin{align}
	\dif X_t &= f(X_t)\dif t + g(X_t; \theta) \circ \dif V_t \\
	Y_t &= v^\trn X_t
\end{align}
\end{subequations}
with initial condition $X_0 = x$, where the columns of the $n \times m$ matrix $g(x;\theta)$ are smooth vector fields on $\Reals^n$ with a smooth dependence on a $k$-dimensional parameter vector $\theta$. Then we have the following result:

\begin{theorem} Suppose that the Lie algebra $\cA$ generated by $g_1(\cdot;\theta),\dots,g_m(\cdot;\theta)$, $\theta \in \Reals^k$, has finite dimension $d$. Then there exist smooth vector fields $b_1,\dots,b_d$ on $\Reals^d$, smooth functions $\beta_{ij} : \Reals^k \to \Reals$, $1 \le i \le m, 1 \le j \le d$, $h : \Reals^n \times \Reals^d \to \Reals^n$, $\varphi : \Reals^d \times \Reals^n \to \Reals^n$, and an almost surely positive stopping time $\tau$, such that the cascade system\begin{subequations}\label{eq:KL_cascade}
	\begin{align}
		\dif W_t &= \sum^m_{i=1}\Bigg(\sum^d_{j=1}\beta_{ij}(\theta)b_j(W_t)\Bigg)  \circ \dif V^i_t \label{eq:KL_cascade_weights}\\
		\dif Z_t &= h(Z_t,W_t) \dif t 
	\end{align}	
	\end{subequations}
	with $W_0 = 0$, $Z_0 = x$ simulates \eqref{eq:general_neural_SDE} up to time $\tau$, i.e., 
	\begin{align*}
		Y_t = v^\trn \varphi(W_t,Z_t), \qquad 0 \le t < \tau.
	\end{align*}
\end{theorem}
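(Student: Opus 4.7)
The plan is to realize the decomposition via a Lie-group action factorization, in the spirit of Wei--Norman representations and the Krener--Lobry/Freedman--Willems theorem on the structure of nonlinear control systems with a finite-dimensional controllability Lie algebra. I would first fix a basis $g^*_1,\dots,g^*_d$ of the vector-space $\cA$, so that since $g_i(\cdot;\theta) \in \cA$ for every $\theta$ and the embedding is smooth in $\theta$, there exist smooth coefficient functions $\beta_{ij} : \Reals^k \to \Reals$ with $g_i(\cdot;\theta) = \sum_{j=1}^d \beta_{ij}(\theta) g^*_j(\cdot)$. This already identifies the functions $\beta_{ij}$ appearing in \eqref{eq:KL_cascade_weights}.

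Next, by Lie's third theorem, $\cA$ is the Lie algebra of a simply connected $d$-dimensional Lie group $G$. Choosing canonical coordinates of the second kind on $G$ around the identity gives a local diffeomorphism $\psi : U \to G$, $U \subset \Reals^d$ open with $0 \in U$, through which the right-invariant vector fields on $G$ corresponding to $g^*_1,\dots,g^*_d$ pull back to smooth vector fields $b_1,\dots,b_d$ on $U$. Define $\varphi : U \times \Reals^n \to \Reals^n$ by $\varphi(W,Z) = \Phi^W(Z)$, where $\Phi^W$ is the diffeomorphism of $\Reals^n$ obtained by composing flows of $g^*_1,\dots,g^*_d$ according to the coordinates of $W$; this is the local action of $G$ on $\Reals^n$ integrating $\cA$. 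The definition of the $b_j$'s then yields the infinitesimal equivariance identity
\begin{align*}
\frac{\p \varphi}{\p W}(W,Z) \, b_j(W) = g^*_j\bigl(\varphi(W,Z)\bigr), \qquad j = 1,\dots,d.
\end{align*}

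The proposed candidates for the cascade are: $W_t$ evolving by \eqref{eq:KL_cascade_weights} with the $\beta_{ij}$ and $b_j$ just constructed, and $Z_t$ evolving by $\dot{Z}_t = h(Z_t,W_t)$ with $h(Z,W) \deq \bigl(\tfrac{\p \varphi}{\p Z}(W,Z)\bigr)^{-1} f\bigl(\varphi(W,Z)\bigr)$, which is smooth and well-defined while $\tfrac{\p \varphi}{\p Z}(W,Z)$ is invertible (in particular in a neighborhood of $W=0$, where $\varphi(0,\cdot) = {\rm id}$). I would then verify the simulation claim by applying the Stratonovich chain rule to $\varphi(W_t, Z_t)$: the equivariance identity matches the diffusion term to $\sum_i g_i(\,\cdot\,;\theta) \circ \dif V^i_t$ after using $g_i(\cdot;\theta) = \sum_j \beta_{ij}(\theta) g^*_j(\cdot)$, and the definition of $h$ matches the drift term to $f$. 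Choosing $v^\trn \varphi(W_t,Z_t)$ as the read-out then gives $Y_t$ exactly.

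The main obstacle is the purely local nature of the construction. The coordinate chart $\psi$, the inverse of $\tfrac{\p \varphi}{\p Z}$, and the resulting diffeomorphism character of $\varphi(W,\cdot)$ all hold only on neighborhoods of $(W,Z)=(0,x)$, and the Stratonovich sample paths of $W_t$ can exit these neighborhoods in finite time. Hence the identity $X_t = \varphi(W_t,Z_t)$ is only a.s.~valid up to $\tau \deq \inf\{t \ge 0 : (W_t,Z_t) \notin \cN\}$, where $\cN$ is the neighborhood on which all of the above inversions and coordinate maps are well-defined; almost sure positivity of $\tau$ follows from continuity of sample paths and the initial condition $(W_0,Z_0) = (0,x) \in \cN$. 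Justifying the Stratonovich calculation itself is standard once one invokes the Sussmann pathwise (generalized-input) interpretation of the Stratonovich SDE \eqref{eq:general_neural_SDE}, which is precisely the framework in which the deterministic Freedman--Willems/Krener--Lobry decomposition applies and transfers to the stochastic setting.
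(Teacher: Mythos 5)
Your proposal follows essentially the same route as the paper's proof: expand $g_i(\cdot;\theta)$ in a fixed basis of $\cA$ to obtain the $\beta_{ij}$, define $\varphi$ as a composition of flows of the basis vector fields (canonical coordinates of the second kind, as in Krener--Lobry), obtain the equivariance identity $\tfrac{\p\varphi}{\p w}b_j = \tilde{g}_j\circ\varphi$, set $h = (\tfrac{\p\varphi}{\p z})^{-1}f\circ\varphi$, and verify the simulation by the Stratonovich chain rule up to the exit time from the neighborhood where everything is defined. The argument is correct and matches the paper's construction, differing only in presentation (you invoke Lie's third theorem and right-invariant fields explicitly where the paper defers to the methods of Krener--Lobry, and you justify invertibility of $\tfrac{\p\varphi}{\p z}$ by continuity near $w=0$ rather than via the variational equation).
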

\begin{remark} {\em The above theorem shows that we can simulate a system like \eqref{eq:general_neural_SDE}, at least locally, by a cascade system like \eqref{eq:KL_cascade} by means of a smooth reparametrization $\theta \mapsto (\beta_{ij}(\theta))$ and a smooth ``decoding map'' $\varphi(w,z) = x$ that converts the internal weights $w \in \Reals^d$ and internal activations $z \in \Reals^n$ into the overall state $x \in \Reals^n$. Observe that the transformed parameters $\beta_{ij}(\theta)$ enter into \eqref{eq:KL_cascade_weights} linearly and that the vector fields $b_1,\dots,b_d$ and the maps $h$ and $\varphi$ do not have any dependence $\theta$. Moreover, since we have not imposed any restrictions on the drift vector field $f$ (apart from the regularity conditions needed to ensure existence and uniqueness of Stratonovich solutions), the Lie algebra generated by the vector fields $h(\cdot,w)$ may be infinite-dimensional.}
\end{remark}
\begin{proof} We closely follow the proof of Theorem~1 in \cite{krener1981complexity} (the special case of Abelian $\cA$ was considered in an earlier paper by Freedman and Willems \cite{freedman1978smooth}). Let $\tilde{g}_1,\ldots,\tilde{g}_d$ be a basis of $\cA$, so that
	\begin{align}\label{eq:g_i_expanded}
		g_i(x;\theta) = \sum^d_{j=1} \beta_{ij}(\theta) \tilde{g}_j(x), \quad i = 1,\dots,m
	\end{align}
	for some smooth maps $\beta_{ij} : \Reals^k \to \Reals$. Let
	\begin{align*}
		\varphi(w,z) \deq e^{w_1 \tilde{g}_1} \circ e^{w_2 \tilde{g}_2} \circ \dots \circ e^{w_d \tilde{g}_d}z,
	\end{align*}
	where $e^{t \tilde{g}_i}$ denotes the flow map of $\tilde{g}_i$, i.e., $e^{t\tilde{g}_i}z$ is the solution, at time $t$, of the ODE
	\begin{align*}
		\dot{z}(t) = \tilde{g}_i(z(t)), \qquad z(0) = z.
	\end{align*}
We have $\varphi(0,x) = x$. Using the methods of \cite{krener1981complexity}, we can show that there exist smooth vector fields $b_1,\dots,b_d$ on $\Reals^d$ and a neighborhood $\cU$ of $(0,x)$, such that
\begin{align*}
	\frac{\partial \varphi}{\partial w}(w,z) b_i(w) = \tilde{g}_i(\varphi(w,z)), \qquad i = 1,\dots,m
\end{align*}
for all $(w,z) \in \cU$. Consequently,
\begin{align*}
\frac{\partial \varphi}{\partial w}(w,z) \bigg(\sum^d_{j=1}\beta_{ij}(\theta)b_i(w)\bigg) = g_i(\varphi(w,z); \theta)
\end{align*}
for all $1 \le i \le d$ and all $\theta \in \Reals^k$. Now, $\varphi(w,z)$ is equal to the solution, at $t = d$, of the time-inhomogeneous ODE
\begin{align*}
	\dot{\xi}(t) = v(\xi(t),t), \qquad z(0) = z
\end{align*}
with
\begin{align*}
	v(z,t) \deq w_{d-i+1} \tilde{g}_{d-i+1}(z), \qquad i-1 \le t < i
\end{align*}
and the Jacobian $\frac{\partial \varphi}{\partial z}(w,z)$ is equal to the solution, at $t=d$, of the variational equation
\begin{align*}
	\dot{\Lambda}(t) = \frac{\partial v}{\partial z}(\xi(t),t)\Lambda(t), \qquad \Lambda(0) = I_n.
\end{align*}
Hence, it is invertible, so we take
\begin{align*}
	h(z,w) \deq \bigg(\frac{\partial \varphi}{\partial z}(w,z)\bigg)^{-1} f(\varphi(w,z))
\end{align*}
for all $(w,z) \in \cU$. Let $(W_t,Z_t)$ be the solution of \eqref{eq:KL_cascade} with $(W_0,Z_0) = (0,x)$, and let $\tau$ be the first exit time from $\cU$:
\begin{align*}
	\tau  \deq \inf \{ t > 0 : (W_t,Z_t) \not\in \cU \}.
\end{align*}
Then, since Stratonovich differentials obey the rules of ordinary calculus, for $0 \le t < \tau$ we have
\begin{align*}
	\dif X_t &= \dif \varphi(W_t,Z_t) \\
	&= \frac{\partial \varphi}{\partial w}(W_t,Z_t) \circ \dif W_t + \frac{\partial \varphi}{\partial z}(W_t,Z_t) \circ \dif Z_t \\
	&= f(\varphi(W_t,Z_t)) \dif t + \sum^m_{i=1} g_i(\varphi(W_t,Z_t); \theta) \circ \dif V^i_t \\
	&= f(X_t) \dif t + \sum^m_{i=1} g_i(X_t; \theta) \circ \dif V^i_t
\end{align*}
and $v^\trn \varphi(W_t,Z_t) = v^\trn X_t = Y_t$. This gives the desired simulation property.
\end{proof}

\section*{Acknowledgments}

T.~Veeravalli would like to thank A.J.~Havens for a PyTorch crash course. M.~Raginsky would like to thank A.~Belabbas for useful suggestions in the early stages of this work.

\bibliography{constructive_neural_SDE.bbl}

\end{document}